\begin{document}

%\DeclareRobustCommand{\SkipTocEntry}[4]{} 

\makeatletter
\@addtoreset{figure}{section}
\def\thefigure{\thesection.\@arabic\c@figure}
\def\fps@figure{h,t}
\@addtoreset{table}{bsection}

\def\thetable{\thesection.\@arabic\c@table}
\def\fps@table{h, t}
\@addtoreset{equation}{section}
\def\theequation{%\thesection.
\arabic{equation}}
\makeatother

\newcommand{\bfi}{\bfseries\itshape}

\newtheorem{theorem}{Theorem}
\newtheorem{acknowledgment}[theorem]{Acknowledgment}
\newtheorem{corollary}[theorem]{Corollary}
\newtheorem{definition}[theorem]{Definition}
\newtheorem{example}[theorem]{Example}
\newtheorem{lemma}[theorem]{Lemma}
\newtheorem{notation}[theorem]{Notation}
\newtheorem{proposition}[theorem]{Proposition}
\newtheorem{remark}[theorem]{Remark}
\newtheorem{setting}[theorem]{Setting}
\newtheorem{hypothesis}[theorem]{Hypothesis}

\numberwithin{theorem}{section}
\numberwithin{equation}{section}

\renewcommand{\1}{{\bf 1}}
\newcommand{\Ad}{{\rm Ad}}
\newcommand{\Alg}{{\rm Alg}\,}
\newcommand{\Aut}{{\rm Aut}\,}
\newcommand{\ad}{{\rm ad}}
\newcommand{\Borel}{{\rm Borel}}
\newcommand{\botimes}{\bar{\otimes}}
\newcommand{\Ci}{{\mathcal C}^\infty}
\newcommand{\Cint}{{\mathcal C}^\infty_{\rm int}}
\newcommand{\Cpol}{{\mathcal C}^\infty_{\rm pol}}
\newcommand{\Der}{{\rm Der}\,}
\newcommand{\de}{{\rm d}}
\newcommand{\ee}{{\rm e}}
\newcommand{\End}{{\rm End}\,}
\newcommand{\ev}{{\rm ev}}
\newcommand{\hotimes}{\widehat{\otimes}}
\newcommand{\id}{{\rm id}}
\newcommand{\ie}{{\rm i}}
\newcommand{\iotaR}{\iota^{\rm R}}
\newcommand{\GL}{{\rm GL}}
\newcommand{\gl}{{{\mathfrak g}{\mathfrak l}}}
\newcommand{\Hom}{{\rm Hom}\,}
\newcommand{\Img}{{\rm Im}\,}
\newcommand{\Ind}{{\rm Ind}}
\newcommand{\Ker}{{\rm Ker}\,}
\newcommand{\Lie}{\text{\bf L}}
\newcommand{\Mt}{{{\mathcal M}_{\text t}}}
\newcommand{\m}{\text{\bf m}}
\newcommand{\pr}{{\rm pr}}
\newcommand{\Ran}{{\rm Ran}\,}
\renewcommand{\Re}{{\rm Re}\,}
\newcommand{\so}{\text{so}}
\newcommand{\spa}{{\rm span}\,}
\newcommand{\Tr}{{\rm Tr}\,}
\newcommand{\tw}{\ast_{\rm tw}}
\newcommand{\Op}{{\rm Op}}
\newcommand{\U}{{\rm U}}
\newcommand{\UCb}{{{\mathcal U}{\mathcal C}_b}}
\newcommand{\weak}{\text{weak}}

\newcommand{\CC}{{\mathbb C}}
\newcommand{\RR}{{\mathbb R}}
\newcommand{\TT}{{\mathbb T}}

\newcommand{\Ac}{{\mathcal A}}
\newcommand{\Bc}{{\mathcal B}}
\newcommand{\Cc}{{\mathcal C}}
\newcommand{\Dc}{{\mathcal D}}
\newcommand{\Ec}{{\mathcal E}}
\newcommand{\Fc}{{\mathcal F}}
\newcommand{\Hc}{{\mathcal H}}
\newcommand{\Kc}{{\mathcal K}}
\newcommand{\Jc}{{\mathcal J}}
\newcommand{\Lc}{{\mathcal L}}
\renewcommand{\Mc}{{\mathcal M}}
\newcommand{\Nc}{{\mathcal N}}
\newcommand{\Oc}{{\mathcal O}}
\newcommand{\Pc}{{\mathcal P}}
\newcommand{\Qc}{{\mathcal Q}}
\newcommand{\Sc}{{\mathcal S}}
\newcommand{\Tc}{{\mathcal T}}
\newcommand{\Vc}{{\mathcal V}}
\newcommand{\Uc}{{\mathcal U}}
\newcommand{\Xc}{{\mathcal X}}
\newcommand{\Yc}{{\mathcal Y}}
\newcommand{\Wig}{{\mathcal W}}

\newcommand{\Bg}{{\mathfrak B}}
\newcommand{\Fg}{{\mathfrak F}}
\newcommand{\Gg}{{\mathfrak G}}
\newcommand{\Ig}{{\mathfrak I}}
\newcommand{\Jg}{{\mathfrak J}}
\newcommand{\Lg}{{\mathfrak L}}
\newcommand{\Pg}{{\mathfrak P}}
\newcommand{\Sg}{{\mathfrak S}}
\newcommand{\Xg}{{\mathfrak X}}
\newcommand{\Yg}{{\mathfrak Y}}
\newcommand{\Zg}{{\mathfrak Z}}

\newcommand{\ag}{{\mathfrak a}}
\newcommand{\bg}{{\mathfrak b}}
\newcommand{\dg}{{\mathfrak d}}
\renewcommand{\gg}{{\mathfrak g}}
\newcommand{\hg}{{\mathfrak h}}
\newcommand{\kg}{{\mathfrak k}}
\newcommand{\mg}{{\mathfrak m}}
\newcommand{\n}{{\mathfrak n}}
\newcommand{\og}{{\mathfrak o}}
\newcommand{\pg}{{\mathfrak p}}
\newcommand{\sg}{{\mathfrak s}}
\newcommand{\tg}{{\mathfrak t}}
\newcommand{\ug}{{\mathfrak u}}
\newcommand{\zg}{{\mathfrak z}}

\newcommand{\ZZ}{\mathbb Z}
\newcommand{\NN}{\mathbb N}
\newcommand{\BB}{\mathbb B}
\newcommand{\HH}{\mathbb H}

\newcommand{\ep}{\varepsilon}

\newcommand{\opn}{\operatorname}
\newcommand{\slim}{\operatornamewithlimits{s-lim\,}}
\newcommand{\sgn}{\operatorname{sgn}}

\makeatletter
\title[Berezin symbols on Lie groups]{Berezin symbols on Lie groups}
%\thanks{File: BBC1\_arxiv.tex}

%----------Author 1
\author[I. Belti\c t\u a]{Ingrid Belti\c t\u a}

\address{%
Institute of Mathematics  
``Simion Stoilow''  
of the Romanian Academy\\
P.O. Box 1-764, 
Bucharest\\
Romania}

\email{ingrid.beltita@gmail.com
%Ingrid.Beltita@imar.ro
}

%----------Author 2
\author[D. Belti\c t\u a]{Daniel Belti\c t\u a}

\address{%
Institute of Mathematics  
``Simion Stoilow''
of the Romanian Academy\\
P.O. Box 1-764, 
Bucharest\\
Romania}

\email{beltita@gmail.com
%Daniel.Beltita@imar.ro
}

%----------Author 3

\author[B. Cahen]{Benjamin Cahen}

\address{%
Laboratoire de Math\'ematiques et Applications de Metz,\\ 
UMR 7122, Universit\'e de
Lorraine (campus de Metz) et CNRS, \\
B\^at. A, Ile du Saulcy, \\
F-57045 Metz
Cedex 1,\\ 
France}

\email{benjamin.cahen@univ-lorraine.fr
}

\thanks{The research of the first two named authors has been partially supported by 
grant of the Romanian National Authority for Scientific Research and
Innovation, CNCS--UEFISCDI, project number PN-II-RU-TE-2014-4-0370.}

%----------classification, keywords, date
%\subjclass{Primary 22E27; Secondary 22E25, 47L15}

%\keywords{coherent states, Berezin calculus, coadjoint orbit}

\date{\today}
\makeatother

\begin{abstract}
In this paper we present a general framework for Berezin covariant symbols, 
and we discuss a few basic properties of the corresponding symbol map, 
with emphasis on its injectivity in connection with some problems 
in representation theory of nilpotent Lie groups. \\
\textit{2010 MSC:} Primary 22E27; Secondary 22E25, 47L15 \\
\textit{Keywords:} coherent states, Berezin calculus, coadjoint orbit
\end{abstract}

\maketitle

%\tableofcontents

\section{Introduction}\label{Sect1}

Let $\Vc$ be a finite-dimensional complex Hilbert space and $N$ be a second countable smooth manifold  
with a fixed Radon measure~$\mu$.  
We denote by $L^2(N,\Vc;\mu)$ the complex Hilbert space of (equivalence classes of) 
$\Vc$-valued functions $\mu$-measurable on $N$ that are absolutely square integrable with respect to~$\mu$. 
We also endow the space of smooth functions $\Ci(N,\Vc)$ with the Fr\'echet topology of uniform convergence on compact sets 
together with their derivatives of arbitrarily high degree. 

If $\Hc\subseteq L^2(N,\Vc)$ is a closed linear subspace with $\Hc\subseteq\Ci(N,\Vc)$, 
then the inclusion map $\Hc\hookrightarrow\Ci(N,\Vc)$ is continuous, 
hence for every $x\in N$ the evaluation map $K_x\colon\Hc\to \Vc$, $f\mapsto f(x)$, is continuous. 
The map 
$$K\colon N\times N\to\Bc(\Vc),\quad K(x,y):=K_xK_y^*$$
is called the \emph{reproducing kernel} of the Hilbert space~$\Hc$.   
Then for every linear operator $A\in\Bc(\Hc)$ we define its \emph{full symbol} as 
$$K^A\colon N\times N\to\Bc(\Vc),\quad K^A(x,y):=K_xAK_y^*\colon \Vc\to \Vc$$
and  $K^A\in\Ci(N\times N,\Bc(\Vc))$.
See \cite[\S I.2]{Ne00} for a detailed discussion of this construction, which goes back to \cite{Be1} and \cite{Be2}.  

\subsection*{Main problem} 
In the above setting, the full symbol map 
$$\Bc(\Hc)\to\Ci(N\times N,\Bc(\Vc)),\quad A\mapsto K^A$$
is injective, as easily checked (see also Proposition~\ref{basic}\eqref{basic_item2} below). 
Therefore it is interesting to find sufficient conditions on a continuous map $\iota\colon \Gamma\to N\times N$, 
ensuring that the corresponding $\iota$-restricted symbol map 
$$S^\iota\colon \Bc(\Hc)\to\Cc(\Gamma,\Bc(\Vc)),\quad A\mapsto K^A\circ\iota$$
is still injective. 
The case of the diagonal embedding $\iota\colon \Gamma=N\hookrightarrow N\times N$, $x\mapsto (x,x)$, is particularly important and 
in this case the $\iota$-restricted symbol map is called the (non-normalized) \emph{Berezin covariant symbol map} 
and is denoted simply by~$S$, hence 
$$S\colon \Bc(\Hc)\to\Ci(N,\Bc(\Vc)),\quad 
(S(A))(x):=K_xAK_x^*\colon \Vc\to \Vc.$$ 
In the present paper we will discuss the above problem and we will briefly 
sketch an approach to that problem based on 
results from our forthcoming paper~\cite{BBC16}. 
This approach blends some techniques of reproducing kernels 
and some basic ideas of linear partial differential equations, 
in order to address a problem motivated by representation theory of Lie groups 
(see \cite{CaS}, \cite{CaSWC}, \cite{CaPad}, and \cite{CaRimut}). 
This problem is also related to some representations of infinite-dimensional Lie groups 
that occur in the study of magnetic fields (see \cite{BB11} and \cite{BB12}). 
Let us also mention that linear differential operators associated to reproducing kernels have been earlier used 
in the literature (see for instance \cite{BG14}).

\section{Basic properties of the Berezin covariant symbol map}\label{Sect2}

In the following we denote by $\Sg_p(\bullet)$ the Schatten ideals of compact operators on Hilbert spaces 
for $1\le p<\infty$.

\begin{proposition}\label{basic}
In the above setting, if $A\in\Bc(\Hc)$, then one has: 
\begin{enumerate}
\item\label{basic_item1} 
If $A\ge 0$, then $S(A)\ge 0$, and moreover $S(A)=0$ if and only if $A=0$. 
\item\label{basic_item2} 
For all $f\in\Hc$ and $x\in N$ one has 
$$(Af)(x)=\int\limits_N K^A(x,y)f(y)\de\mu(y). $$
\item\label{basic_item3} 
If $\{e_j\}_{j\in J}$ is an orthonormal basis of $\Hc$, then for all $x,y\in N$ one has
$$K^A(x,y)=\sum\limits_{j\in J}K_xe_j\otimes \overline{K_yA^*e_j}
=\sum\limits_{j\in J}e_j(x)\otimes \overline{(A^*e_j)(y)}\in\Bc(\Vc),$$
where for any $v,w\in\Vc$ we define their corresponding rank-one operator $v\otimes\overline{w}:=(\ \cdot \mid w)v\in\Bc(\Vc)$. 
\item\label{basic_item4} 
If $A\in\Sg_2(\Hc)$, then 
$$\Vert A\Vert^2_{\Sg_2(\Hc)}=\iint\limits_{N\times N}\Vert K^A(x,y)\Vert^2_{\Sg_2(\Vc)}\de\mu(x)\de\mu(y)$$ 
and if $A\in\Sg_1(\Hc)$, then 
$$\Tr A=\int\limits_{N}\Tr K^A(x,x)\de\mu(x).$$ 
\end{enumerate}
\end{proposition}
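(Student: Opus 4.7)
For item \eqref{basic_item1}, I would unwind the definitions: for $v\in\Vc$ and $x\in N$,
$$(S(A)(x)v\mid v)_\Vc=(AK_x^*v\mid K_x^*v)_\Hc,$$
which is nonnegative as soon as $A\ge 0$. To upgrade to $S(A)=0\Rightarrow A=0$ (for $A\ge 0$), I would write $A=B^*B$ with $B=A^{1/2}$, deduce $BK_x^*v=0$ for every $x\in N$ and every $v\in\Vc$, and then invoke density of $\spa\{K_x^*v:x\in N,\ v\in\Vc\}$ in $\Hc$. This density is immediate from the identity $(f\mid K_x^*v)_\Hc=(f(x)\mid v)_\Vc$ combined with the inclusion $\Hc\hookrightarrow\Ci(N,\Vc)$.

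For item \eqref{basic_item2}, the crux is a weak reproducing formula: every $h\in\Hc$ satisfies
$$h=\int_N K_y^*h(y)\,\de\mu(y)$$
in the sense that pairing both sides against an arbitrary $g\in\Hc$ yields the same scalar. This is proved by combining $(g\mid K_y^*h(y))_\Hc=(g(y)\mid h(y))_\Vc$ with the fact that the scalar product of $\Hc$ coincides with the $L^2(N,\Vc;\mu)$ scalar product restricted to $\Hc$. Substituting $h=f$ and applying the bounded operator $K_xA$ to both sides produces the stated integral representation.

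Items \eqref{basic_item3} and \eqref{basic_item4} are then formal computations anchored in the Bessel expansion
$$K_y^*v=\sum_{j\in J}(v\mid e_j(y))_\Vc\,e_j,$$
which delivers $K^A(x,y)^*=K_yA^*K_x^*=\sum_j (A^*e_j)(y)\otimes\overline{e_j(x)}$; taking adjoints yields \eqref{basic_item3}. For \eqref{basic_item4} I would plug this rank-one expansion into the elementary identities
$$\Tr\bigl((a_1\otimes\overline{b_1})(a_2\otimes\overline{b_2})^*\bigr)=(a_1\mid a_2)_\Vc\,\overline{(b_1\mid b_2)_\Vc},\qquad \Tr(a\otimes\overline{b})=(a\mid b)_\Vc,$$
and then exploit orthonormality of $\{e_j\}$ together with Fubini to collapse the double integral (or the single integral plus sum, in the trace case) to $\sum_j\|A^*e_j\|_\Hc^2=\|A\|_{\Sg_2}^2$ and $\sum_j(Ae_j\mid e_j)_\Hc=\Tr A$ respectively. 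The one point deserving real care is the interchange of sum and integral in \eqref{basic_item4}: for the $\Sg_2$ identity all integrands are nonnegative so Tonelli applies directly, whereas for the trace identity I would reduce to the positive case by decomposing $A\in\Sg_1$ into a complex linear combination of positive trace-class operators, the density argument of \eqref{basic_item1} being the only other genuinely analytic step in the proof.
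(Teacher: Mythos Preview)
Your argument is correct and in fact supplies more than the paper does: the paper's own ``proof'' consists entirely of pointers to \cite{BBC16} and to the relevant passages in Neeb's book~\cite{Ne00}, so there is nothing to compare against at the level of technique. Your treatment of \eqref{basic_item1}--\eqref{basic_item3} is the standard one and needs no comment.

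One small imprecision in \eqref{basic_item4}: if you literally expand $\Vert K^A(x,y)\Vert_{\Sg_2(\Vc)}^2$ via the rank-one formula from \eqref{basic_item3}, the resulting double sum over $i,j\in J$ has off-diagonal terms $(e_i(x)\mid e_j(x))_\Vc\,\overline{((A^*e_i)(y)\mid (A^*e_j)(y))_\Vc}$ that are \emph{not} nonnegative, so Tonelli does not apply ``directly'' to that expansion. The fix is painless. Since $\Vc$ is finite-dimensional, first write $\Vert K_xAK_y^*\Vert_{\Sg_2(\Vc)}^2=\sum_{k}\Vert (AK_y^*v_k)(x)\Vert_\Vc^2$ for a finite orthonormal basis $\{v_k\}$ of $\Vc$; the $x$-integration then involves only a finite sum and yields $\Vert AK_y^*\Vert_{\Sg_2(\Vc,\Hc)}^2=\sum_{j\in J}\Vert (A^*e_j)(y)\Vert_\Vc^2$, and now every term is nonnegative so Tonelli does handle the remaining interchange of $\sum_{j}$ with $\int_N\de\mu(y)$. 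Alternatively, prove the identity first for finite-rank $A$ (where all sums are finite) and pass to the $\Sg_2$-limit. Either route closes the gap; the rest of your sketch stands as written.
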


\begin{proof}
See \cite{BBC16} for more general versions of these assertions, 
in which in particular the Hilbert space $\Vc$ is infinite-dimensional. 
Assertion~\eqref{basic_item2} is a generalization of \cite[Ex. I.2.3(c)]{Ne00}, 
Assertion~\eqref{basic_item3} is a generalization of \cite[Prop. I.1.8(b)]{Ne00}, 
while Assertion~\eqref{basic_item4} is a generalization of \cite[Cor. A.I.12]{Ne00}. 
\end{proof}

\section{Examples of Berezin symbols and specific applications}\label{Sect3}

Here we specialize to the following setting: 
\begin{enumerate}%[(i)]
\item\label{set_i}  $G$  is a connected, simply connected, nilpotent Lie group 
with its Lie algebra $\gg$, whose center is denoted by~$\zg$, 
and $\gg^*$ is the linear dual space of $\gg$, 
with the corresponding duality pairing
$\langle\cdot,\cdot\rangle\colon\gg^*\times\gg\to\RR$. 
\item\label{set_iii}  $\pi\colon G\to\Bc(\Hc)$ be a unitary irreducible representation 
associated with the coadjoint orbit $\Oc\subseteq\gg^*$. 
\end{enumerate}
The group $G$ will be identified  with $\gg$ via the exponential map, so that 
$G= (\gg, \cdot_G)$, where $\cdot_G$ is the Baker-Campbell-Hausdorff multiplication. 

We use the notation $\Hc_\infty= \Hc_\infty(\pi)$ for the nuclear 
Fr\'echet space of smooth vectors of $\pi$. 
Let then 
$\Hc_{-\infty}$ be the space of antilinear continuous functionals on $\Hc_\infty$, 
$\Bc(\Hc_\infty,\Hc_{-\infty})$ be the space of continuous linear operators between the above space 
(these operators are thought of as possibly unbounded linear operators in $\Hc$), 
and $\Sc(\bullet)$ and $\Sc'(\bullet)$ for the spaces of Schwartz functions and tempered distributions, respectively.
Then we have that 
$$\Hc_\infty\hookrightarrow\Hc\hookrightarrow\Hc_{-\infty}.$$   
Let $X_1, \dots, X_m$ be a Jordan-H\"older basis in $\gg$ 
and $e\subseteq\{1,\dots,m\}$ be the set of jump indices of the coadjoint orbit $\Oc$. 
Select $\xi_0\in\Oc$ and let $\gg=\gg_{\xi_0}\dotplus\gg_e$ be its corresponding direct sum decomposition, 
where $\gg_e$ is the linear span of $\{X_j\mid j\in e\}$ 
and $\gg_{\xi_0}:=\{x\in\gg\mid[x,\gg]\subseteq\Ker\xi_0\}$. 

We need the notation for the Fourier transform. 
For $a\in \Sc(\Oc)$ we set
$$ \widehat a (x) = \int\limits_{\Oc} \ee^{-\ie \langle\xi,x\rangle} a(\xi) \de \xi, $$
where on $\Oc$ we consider the Liouville measure normalized such that the  Fourier transform
is unitary when extended to $L^2(\Oc) \to L^2(\gg_e)$. 
We denote by $\check F$ the inverse Fourier transform of $F\in L^2(\gg_0)$.

\begin{definition}\label{coeff}
\normalfont
\begin{enumerate}%[(i)]
\item\label{coeff_item1} 
For $f\in \Hc$ and $\phi\in \Hc$, or $f\in \Hc_{-\infty}$ and $\phi\in \Hc_\infty$, 
let $\Ac  \in C(\gg_e)\cap \Sc'(\gg_e)$ be the \emph{coefficient mapping} for $\pi$,  defined by
$$ \Ac_\phi f(x) = \Ac(f, \phi)(x):= (f\mid \pi(x) \phi),\ x\in\gg_e.$$
\item \label{coeff_item2}
For $f\in \Hc$ and $\phi\in \Hc$, or $f\in \Hc_{-\infty}$ and $\phi\in \Hc_\infty$, 
the \emph{cross-Wigner distribution}
$\Wig(f,\phi)\in\Sc'(\Oc)$  
is defined by the formula 
$$\widehat{\Wig(f,\phi)}=\Ac_\phi f.$$ 
\end{enumerate}
\end{definition}

\begin{proposition}\label{orth}
For $f, \phi\in \Hc$ we have that $\Ac(f, \phi)\in L^2(\gg_0)$, $\Wig(f, \phi)\in L^2(\Oc)$.
Moreover
\begin{align}
(\Ac(f_1, \phi_1)\mid \Ac(f_2, \phi_2))_{L^2(\gg_0)} & = (f_1\mid  f_2) \overline{(\phi_1\mid \phi_2) }
\nonumber
%\label{orth_A}
\\
(\Wig(f_1, \phi_1)\mid \Wig(f_2, \phi_2))_{L^2(\Oc)} & = (f_1\mid f_2) 
\overline{(\phi_1\mid \phi_2)}
\nonumber
%\label{orth_W}
\end{align}
for all $f_1, f_2, \phi_1, \phi_2\in \Hc$.
\end{proposition}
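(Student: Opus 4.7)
The plan is to prove the orthogonality for the coefficient map $\Ac$ directly, and deduce the one for the cross-Wigner distribution as an immediate consequence of Plancherel's theorem. By the polarization identity in the sesquilinear arguments $(f_1,f_2)$ and $(\phi_1,\phi_2)$, it suffices to show
$$\|\Ac(f,\phi)\|_{L^2(\gg_e)}^2 = \|f\|^2\,\|\phi\|^2 \quad \text{for every } f,\phi\in\Hc.$$
To prove this, I would use the Kirillov--Pedersen orbit realization of $\pi$ adapted to the Jordan--Hölder basis $X_1,\dots,X_m$: choose a real polarization $\bg\subseteq\gg$ at $\xi_0$, let $\chi_{\xi_0}$ be the corresponding unitary character of $B:=\exp\bg$, and realize $\pi\simeq\Ind_B^G\chi_{\xi_0}$ on $\Hc$ identified with $L^2$ of a Lagrangian complement inside $\gg_e$. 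In this model, for $x\in\gg_e$, the operator $\pi(x)$ acts as the composition of a translation on the Lagrangian with multiplication by a character whose phase is a polynomial determined by the Baker--Campbell--Hausdorff formula.

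In such an $L^2$-realization, the coefficient function $\Ac_\phi f(x)=(f\mid\pi(x)\phi)$ becomes an oscillatory integral of short-time-Fourier-transform type, of the shape
$$\Ac_\phi f(x) = \int f(y)\,\overline{\phi(y-x')}\,\ee^{-\ie\Phi(x,y)}\,\de y,$$
with $x=(x',x'')$ a Darboux-type splitting of $\gg_e$ and $\Phi$ a real polynomial phase. The $L^2(\gg_e)$-norm of $\Ac_\phi f$ can then be computed by Plancherel's theorem applied in the variable dual to $x''$ (which eliminates the oscillatory factor and collapses one $y$-integration to a $\delta$), followed by a translation change of variable in the remaining integrals. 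This produces $\|\Ac(f,\phi)\|^2=\|f\|^2\|\phi\|^2$, and polarization yields the full bilinear formula in the statement.

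The main obstacle is the bookkeeping of normalizations: the Pfaffian-type Jacobian in the identification $\Hc\simeq L^2(\text{Lagrangian})$, the Liouville measure on $\Oc$, and the chosen measure on $\gg_e$ must conspire so that one obtains a pure isometry rather than an isometry up to a positive scalar. A cleaner alternative is to establish the identity up to a positive constant $c$ using the irreducibility of $\pi$ together with the $G$-invariance properties of the sesquilinear form $(f_1,\phi_1;f_2,\phi_2)\mapsto(\Ac(f_1,\phi_1)\mid\Ac(f_2,\phi_2))_{L^2(\gg_e)}$, and then pin down $c=1$ by testing on a single explicit pair (e.g.\ a coherent-state vector in a Heisenberg-model subquotient). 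In either case, the Wigner identity follows at once: by the defining relation $\widehat{\Wig(f_j,\phi_j)}=\Ac(f_j,\phi_j)$ and the stated normalization of the Liouville measure on $\Oc$, the Fourier transform is unitary from $L^2(\Oc)$ to $L^2(\gg_e)$, so it transports the inner product from one side to the other, converting the $\Ac$-orthogonality into the $\Wig$-orthogonality.
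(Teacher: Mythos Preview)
The paper does not actually prove this proposition: it simply cites \cite[Prop.~2.8(i)]{BB10c} and moves on. Your proposal, by contrast, sketches a genuine direct argument. The outline is sound: polarization does reduce the bilinear identity to the diagonal case $\|\Ac(f,\phi)\|^2=\|f\|^2\|\phi\|^2$; realizing $\pi$ concretely as an induced representation on an $L^2$ space over a cross-section to the polarization and computing the coefficient as an STFT-type oscillatory integral is exactly how such orthogonality relations are established for nilpotent groups; and the passage from $\Ac$ to $\Wig$ via the normalized Fourier transform is immediate from Definition~\ref{coeff}\eqref{coeff_item2}.

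Two remarks on the sketch. First, your Schur-lemma alternative (``identity up to a constant, then test on one vector'') presupposes that $\Ac(f,\phi)\in L^2(\gg_e)$ to begin with, which is part of what must be shown; you would need to run that argument first on a dense subspace such as $\Hc_\infty$, where the $L^2$ property can be checked directly, and only then invoke irreducibility. Second, the suggestion to pin down the constant via a ``Heisenberg-model subquotient'' is not available for an arbitrary nilpotent $G$ and arbitrary orbit $\Oc$; the honest way to fix the constant is to trace the measure normalizations through the Pedersen parametrization of $\Oc$ by $\gg_e$, which is precisely the bookkeeping you flagged as the main obstacle. None of this is a gap in the strategy, only in the execution, and the cited reference carries out exactly this computation.
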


\begin{proof}
This follows from \cite[Prop.~2.8(i)]{BB10c}.
\end{proof}

From now on we assume  that 
\begin{equation*}%\label{window}
\phi\in\Hc_\infty \quad \text{with}\quad  \Vert\phi\Vert=1\quad  \text{is fixed}.
\end{equation*}
We let 
   $V\colon\Hc\to L^2(\gg_e)$  be the isometry defined by 
\begin{equation*}%\label{V}
  (V f)(x) := (f\mid \phi_x)\text{ for all }x\in\gg_e
  \end{equation*}
where $\phi_x:=\pi(x)\phi$. 
We denote 
\begin{equation*}%\label{sp_K}
\Kc := \Ran V\subset L^2(\gg_0).
\end{equation*} 
Then $\Kc$ is a reproducing kernel Hilbert space of smooth functions, with inner product equal to the 
$L^2(\gg_0)$-inner product, so the present construction is a special instance of the general framework of Section~\ref{Sect1} 
with $\Vc=\CC$.
 
The reproducing kernel of $\Kc$ is given by 
\begin{equation*}%\label{K}
K(x, y) = (\pi(x) \phi\mid \pi(y) \phi)=(\phi_x\mid\phi_y), 
\end{equation*}
and  $K_y(\cdot):= K(\cdot,y) \in \Ran V$, for all $y\in \gg_0$.
We also note that 
\begin{equation*}%\label{KY}
(\forall x\in\gg_0)\quad K_x = V \phi_x. 
\end{equation*}
The Berezin covariant symbol of an operator $T\in \Bc(\Kc)$ is then the 
bounded continuous function
$$ S(T)\colon\gg_e\to\CC,\quad S(T)(x) = (TK_x\mid K_x)_{\Kc}.$$
One thus obtains a well-defined bounded linear operator
$$S\colon \Bc(\Kc)\to\Ci(\gg_e)\cap L^\infty(\gg_e)$$ 
which also gives by restriction a bounded linear operator 
$$S\colon \Sg_2(\Kc) \to L^2(\gg_0).$$
To find accurate descriptions of the kernels of the above operators is a very important problem 
for many reasons, as explained in \cite{CaS}, \cite{CaSWC}, \cite{CaPad}, and \cite{CaRimut} 
also for other classes of Lie groups than the nilpotent ones.  

\subsection*{The case of flat coadjoint orbits of nilpotent Lie groups}

We now assume that the coadjoint orbit $\Oc$ is flat, 
hence its corresponding representation $\pi$ is square integrable modulo the center of~$G$. 

\begin{remark}\label{rho}
\normalfont 
Consider the representation $\rho\colon G \to \Bc (\Kc)$, 
$$ \rho(g) = V \pi(g) V^*, $$
that is a unitary representation of $G$ equivalent to $\pi$, thus it corresponds to the same coadjoint orbit $\Oc$. 
We denote by $\Op_\rho$ the Weyl calculus corresponding to this representation. 
The following then holds:
\begin{enumerate}
\item \label{item_rho1}
For  $a\in \Sc'(\Oc)$ one has $ \Op_\rho(a)= V \Op(a)V^* = T_a$. 
\item \label{item_rho2}
 For $T\in \Bc(\Kc)$ and $X\in \gg_0$, one has 
 \begin{equation}\label{cov1}
  S(\rho(x)^{-1} T \rho(x))(z)= S(T)(x\cdot z), \qquad \text{for all} \; z\in\gg_0.
  \end{equation} 
\end{enumerate} 
\end{remark}

\begin{theorem}\label{Inj_S}
Assume that in the constructions above, 
\begin{equation}\label{left_cyclic}
\phi\in \Hc_\infty \quad \text{ is such that $\Wig(\phi, \phi)$ 
is  a cyclic vector for $\alpha$.}
\end{equation}
Then $S\colon \Sg_2(\Kc) \to L^2(\gg_0)$ is injective. 
\end{theorem}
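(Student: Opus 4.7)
My plan is to translate the injectivity of $S$ on $\Sg_2(\Kc)$, via the Weyl calculus identification of Remark~\ref{rho}\eqref{item_rho1}, into a density statement for the $\alpha$-orbit of $\Wig(\phi,\phi)$ in $L^2(\Oc)$, at which point the cyclicity hypothesis~\eqref{left_cyclic} finishes the argument.

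Given $T \in \Sg_2(\Kc)$ with $S(T) = 0$, I will first use Remark~\ref{rho}\eqref{item_rho1} to write $T = T_a = V\Op(a)V^*$ for a unique $a \in \Sc'(\Oc)$. Since $T$ is Hilbert--Schmidt and $V$ is isometric, the Plancherel-type identity for the Weyl calculus associated to the square-integrable-mod-center representation $\pi$ forces $a \in L^2(\Oc)$. Using $K_x = V\phi_x$ together with $V^*V = I_\Hc$, I then compute
$$S(T_a)(x) = (T_a K_x \mid K_x)_{\Kc} = (\Op(a)\phi_x \mid \phi_x)_{\Hc}.$$
The standard duality between the Weyl calculus and the cross-Wigner distribution, namely $(\Op(a) f \mid g)_{\Hc} = (a \mid \Wig(g,f))_{L^2(\Oc)}$, then turns this into the $L^2(\Oc)$ pairing
$$S(T_a)(x) = (a \mid \Wig(\phi_x,\phi_x))_{L^2(\Oc)}.$$

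The next step is the covariance identity $\Wig(\phi_x,\phi_x) = \alpha(x)\Wig(\phi,\phi)$, where $\alpha$ is the unitary representation of $G$ on $L^2(\Oc)$ appearing in~\eqref{left_cyclic}; for a flat orbit $\Oc$ the representation $\alpha$ is built from the affine action of $G$ on $\Oc \subset \gg^*$. Combining this with the covariance relation~\eqref{cov1} of Remark~\ref{rho}\eqref{item_rho2}, the hypothesis $S(T_a) \equiv 0$ on $\gg_e$ becomes
$$a \perp \alpha(g)\Wig(\phi,\phi) \quad\text{for all } g \in G$$
inside $L^2(\Oc)$. The cyclicity hypothesis~\eqref{left_cyclic} says precisely that the closed linear span of $\{\alpha(g)\Wig(\phi,\phi) \mid g \in G\}$ is all of $L^2(\Oc)$, so $a = 0$ and $T = 0$.

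The main obstacle will be the rigorous verification of $\Wig(\phi_x,\phi_x) = \alpha(x)\Wig(\phi,\phi)$: one needs to check that the representation $\alpha$ on $L^2(\Oc)$ which intertwines conjugation by $\pi(x)$ with translation of Weyl symbols is exactly the representation appearing in the cyclicity hypothesis. This hinges on the Fourier transform conventions between $\gg_e$ and $\Oc$ fixed in the text, together with the Baker--Campbell--Hausdorff formula encoding the $G$-product on $\gg$ and the flatness of~$\Oc$, which together guarantee that $\alpha$ is indeed a well-defined unitary representation with the required intertwining property.
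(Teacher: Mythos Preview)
Your proposal is correct and follows precisely the route the paper indicates: the paper's proof is only the one-line pointer ``based on specific properties of the Weyl--Pedersen calculus from \cite{BB10c},'' and what you have written is exactly the natural unpacking of that pointer, passing through $T=\Op_\rho(a)$, the duality $(\Op(a)\phi_x\mid\phi_x)=(a\mid\Wig(\phi_x,\phi_x))$, and the covariance $\Wig(\phi_x,\phi_x)=\alpha(x)\Wig(\phi,\phi)$ to reduce to the cyclicity hypothesis. The only point worth making explicit when you write it up is why orthogonality to $\alpha(x)\Wig(\phi,\phi)$ for $x\in\gg_e$ already yields orthogonality to the full $G$-orbit; this follows since for a flat orbit $\gg_{\xi_0}=\zg$ and the center acts trivially under~$\alpha$.
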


\begin{proof}
The method of proof is based on specific properties of the Weyl-Pedersen calculus 
from \cite{BB10c}. 
\end{proof}

We refer to \cite{BBC16} for a more complete discussion  and for 
proofs of the above assertions in a much more general setting.  
To conclude this paper we will just briefly discuss an important example. 

\subsection*{The special case of the Heisenberg groups}

Let $G$ be the Heisenberg group  
of dimension $2n+1$ and $H$ be the center of $G$. 
Let
$\{X_1,\ldots,X_n,Y_1,\ldots,Y_n, {Z}\}$ be a basis of
${\mathfrak g}$ in which the only non trivial brackets are
$[X_k\,,\,Y_k]= {Z}$, $ 1\leq k\leq n$ and let
$\{X_1^{\ast},\ldots,X_n^{\ast},Y_1^{\ast},\ldots,Y_n^{\ast},{
Z}^{\ast}\}$ be the corresponding dual basis of ${\mathfrak
g}^{\ast}$.

For $a=(a_1,a_2,\ldots,a_n)\in {\mathbb R}^n$, $b=(b_1,b_2,\ldots,b_n)\in
{\mathbb R}^n$ and $c\in {\mathbb R}$, we denote by $[a,b,c]$ the
element $\exp_{G}(\sum_{k=1}^na_kX_k+\sum_{k=1}^nb_kY_k+c{
Z})$ of $G$. Then the multiplication of $G$ is given by
\begin{equation*}[a,b,c][a',b',c']=[a+a',b+b',c+c'+\frac{1}{2}(ab'-a'b)]\end{equation*}
and $H$ consists of all elements of the form $[0,0,c]$ with $c\in {\mathbb R}$.

The coadjoint action of $G$ is then given by
\begin{equation*}
\begin{aligned}
\Ad ^{\ast} & ([a,b,c])\, \left(
\sum_{k=1}^n{\alpha}_kX_k^{\ast}+\sum_{k=1}^n{\beta}_kY_k^{\ast}+
{\gamma}{ Z}^{\ast}\right) \\
&=\sum_{k=1}^n({\alpha_k}+{\gamma}
{b_k})X_k^{\ast}+ \sum_{k=1}^n({\beta}_k-{\gamma}{ a_k})Y_k^{\ast}+
{\gamma}{ Z}^{\ast}. 
\end{aligned}
\end{equation*}

Fix a real number $\lambda>0$. By the Stone-von Neumann
theorem, there exists a unique (up to unitary equivalence) unitary
irreducible representation $\pi_0$ of $G$ whose restriction to $H$ is the character  $\chi:[0,0,c]\rightarrow e^{i\lambda c}$.  
This representation is realized on
${\mathcal H}_0=L^2({\mathbb R}^n)$ as
\begin{equation*}\pi_{0}([a,b,c])(f)(x)
=e^{i\lambda(c-bx+\frac{1}{2}ab)}f(x-a). 
\end{equation*}
Here we take $\phi$ to be the function $\phi(x)=\left(\frac{\lambda}{\pi}\right)^{1/4}e^{-\lambda x^2/2}$. 
Then we have $\Vert \phi \Vert_2=1$. 

Theorem~\ref{Inj_S} gives a new proof of the following known fact: 

\begin{corollary} The map $S$ is a bounded linear operator from
$\Sg_2({\mathcal H}_0)$  to $L^2(\RR^{2n})$  which
is one-to-one and has dense range. 
\end{corollary}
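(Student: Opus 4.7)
The plan is to apply Theorem~\ref{Inj_S} for the injectivity half. The Schr\"odinger representation $\pi_0$ is square integrable modulo the center $H$, and its coadjoint orbit is the flat hyperplane $\Oc=\{\xi\in\gg^*:\langle\xi,Z\rangle=\lambda\}$. The jump index set is $e=\{1,\ldots,2n\}$, so $\gg_e=\spa\{X_1,\ldots,X_n,Y_1,\ldots,Y_n\}\cong\RR^{2n}$, and the target of $S$ is $L^2(\gg_e)\cong L^2(\RR^{2n})$. To invoke Theorem~\ref{Inj_S} I only need to verify the cyclicity hypothesis~\eqref{left_cyclic} for the Gaussian $\phi$; the dense range assertion will require a separate, parallel argument using the Weyl--Pedersen calculus $\Op_\rho$ from Remark~\ref{rho}.

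First I would compute $\Wig(\phi,\phi)$. The coefficient $\Ac_\phi\phi(a,b)=(\phi\mid\pi_0([a,b,0])\phi)$ splits into a product of one-dimensional Gaussian integrals, producing, up to normalization, $e^{-\lambda(a^2+b^2)/4}$ times an oscillatory phase. Its Fourier transform --- which by Definition~\ref{coeff}\eqref{coeff_item2} equals $\Wig(\phi,\phi)$ --- is therefore a nowhere-vanishing Gaussian on $\Oc\cong\RR^{2n}$. For a flat coadjoint orbit, the representation $\alpha$ on $L^2(\Oc)$ is equivalent to translation on $\RR^{2n}$, and by Wiener's tauberian theorem a vector in $L^2(\RR^{2n})$ is cyclic for translations iff its Fourier transform has no common zero. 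Since a Gaussian is nowhere zero, cyclicity holds, and Theorem~\ref{Inj_S} yields injectivity of $S\colon\Sg_2(\Kc)\to L^2(\RR^{2n})$. Injectivity on $\Sg_2(\Hc_0)$ then follows by transport along the unitary $V$.

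For the dense range, I would use Plancherel for the Weyl calculus (equivalently, the Stone--von Neumann theorem): $\Op\colon L^2(\Oc)\to\Sg_2(\Hc_0)$ is a unitary isomorphism up to a fixed normalization constant, hence so is its conjugate $\Op_\rho=V\Op(\cdot)V^*\colon L^2(\Oc)\to\Sg_2(\Kc)$. Therefore $S(\Sg_2(\Kc))$ is dense in $L^2(\Oc)$ iff the Berezin transform $B:=S\circ\Op_\rho\colon L^2(\Oc)\to L^2(\Oc)$ has dense range. A short computation identifies $B$ with convolution on $\RR^{2n}$ by the Gaussian $\Wig(\phi,\phi)$: writing $V^*VK_z=\phi_z$ one obtains $(B a)(z)=(\Op(a)\phi_z\mid\phi_z)=(a*\Wig(\phi,\phi))(z)$. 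The Fourier multiplier of this convolution is again a nowhere-vanishing Gaussian, so $B$ is injective with dense range, giving density of $S$ (and incidentally a second proof of the injectivity).

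The main obstacle is not conceptual --- once Theorem~\ref{Inj_S} and the Weyl calculus of Remark~\ref{rho} are available, the proof is bookkeeping --- but is the normalization-tracking through the Fourier transform that defines $\Wig$. The entire corollary ultimately rests on the single fact that a Gaussian has nowhere-vanishing Fourier transform: this is what makes $\Wig(\phi,\phi)$ cyclic for translations (hence injectivity via Theorem~\ref{Inj_S}) and what makes $B$ surjective with dense range (hence density).
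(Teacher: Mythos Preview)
Your proposal is correct and, for injectivity, follows exactly the route the paper indicates: verify that the Gaussian $\phi$ satisfies the cyclicity hypothesis~\eqref{left_cyclic} and invoke Theorem~\ref{Inj_S}. The paper itself offers no argument for the corollary beyond the sentence ``Theorem~\ref{Inj_S} gives a new proof of the following known fact,'' so your explicit computation of $\Ac_\phi\phi$ and the Wiener-type cyclicity argument actually supply details the paper leaves implicit. One minor correction: the coefficient $\Ac_\phi\phi(a,b)$ comes out real, equal to $e^{-\lambda(|a|^2+|b|^2)/4}$ with no residual oscillatory phase; this does not affect your argument, since only the nowhere-vanishing of the Fourier transform matters.

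Your treatment of the dense-range assertion via the Berezin transform $B=S\circ\Op_\rho$ and its identification with Gaussian convolution is genuinely additional relative to the paper, which does not argue dense range at all and simply records the corollary as a known fact. The argument is valid and natural: the covariance of the Wigner function under $\pi_0$ makes $\Wig(\phi_z,\phi_z)$ a translate of $\Wig(\phi,\phi)$, whence $B$ is a Fourier multiplier by a nowhere-vanishing Gaussian and therefore has dense range on $L^2$. As you observe, this route also reproves injectivity without appealing to Theorem~\ref{Inj_S}, so you end up with two independent proofs of the one-to-one claim where the paper sketches only one.
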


% ------------------------------------------------------------------------

%\subsection*{Acknowledgment}

\end{document}